\title{Pull-back of quasi-log structures}
\author{Osamu Fujino}
\date{2016/6/18, version 1.10}
\subjclass[2010]{Primary 14E30; Secondary 14J45.}
\keywords{quasi-log structures, log Fano pairs, fundamental groups}
\address{Department of Mathematics, Graduate School of Science,
Osaka University, Toyonaka, Osaka 560-0043, Japan}
\email{fujino@math.sci.osaka-u.ac.jp}
\newcommand{\Div}[0]{\operatorname{Div}}
\newcommand{\Pic}[0]{\operatorname{Pic}}
\newcommand{\Supp}[0]{\operatorname{Supp}}
\newcommand{\Spec}[0]{\operatorname{Spec}}
\newcommand{\Exc}[0]{\operatorname{Exc}}
\newtheorem{thm}{Theorem}[section]
\newtheorem{lem}[thm]{Lemma}
\newtheorem{cor}[thm]{Corollary}
\newtheorem{prop}[thm]{Proposition}
\newtheorem*{claim}{Claim}
\newtheorem{conj}[thm]{Conjecture}
\theoremstyle{definition}
\newtheorem{ex}[thm]{Example}
\newtheorem{defn}[thm]{Definition}
\newtheorem{rem}[thm]{Remark}
\newtheorem*{ack}{Acknowledgments}
\newtheorem{notation}[thm]{Notation}
\begin{document}
\bibliographystyle{amsalpha+}

\maketitle

\begin{abstract}
We prove that the pull-back of 
a quasi-log scheme by a smooth 
quasi-projective 
morphism 
has a natural quasi-log structure. 
We treat an application to log Fano pairs. 
This paper also contains a proof 
of the simple connectedness of 
log Fano pairs with only log canonical singularities 
by Kento Fujita. 
\end{abstract}

\section{Introduction}\label{f-sec1}

The following theorem is the main result 
of this paper, which is natural but 
missing in the literature. 
For the precise statement, see Theorem \ref{f-thm3.5} below. 

\begin{thm}[Pull-back of quasi-log structures]\label{f-thm1.1}  
Let $[X, \omega]$ be a quasi-log scheme and let 
$h:X'\to X$ be a smooth quasi-projective 
morphism. 
Then $[X', \omega']$, where $\omega'=h^*\omega\otimes \omega_{X'/X}$ 
with $\omega_{X'/X}=\det \Omega^1_{X'/X}$, 
has a natural quasi-log structure induced by $h$. 

In particular, if $h$ is 
a finite \'etale morphism, 
then $[X', \omega']$, where $\omega'=h^*\omega$, 
has a natural quasi-log structure induced by $h$. 
\end{thm}

We make an important remark:~we 
do not know whether Theorem \ref{f-thm1.1} holds true or not 
without assuming that $h$ is {\em{quasi-projective}}. 
As an easy application of Theorem \ref{f-thm1.1}, we obtain: 

\begin{cor}\label{f-cor1.2}
Let $[X, \omega]$ be a projective quasi-log canonical pair such that 
$-\omega$ is ample. 
Then the algebraic fundamental group of $X$ is trivial, or 
equivalently, 
$X$ has no nontrivial finite \'etale covers. 
\end{cor}

In view of Corollary \ref{f-cor1.2}, it is natural to conjecture: 

\begin{conj}\label{f-conj1.3}
Let $[X, \omega]$ be a projective 
quasi-log canonical pair such that $-\omega$ is ample. 
Then $X$ is simply connected. 
\end{conj}

In general, there exists an irreducible projective 
variety whose algebraic fundamental group is trivial 
and whose topological fundamental group is nontrivial 
(Example \ref{f-ex5.4}). 
As a special case of Conjecture \ref{f-conj1.3}, we have: 

\begin{conj}\label{f-conj1.4} 
Let $(X, \Delta)$ be a projective semi-log canonical pair 
such that $-(K_X+\Delta)$ is ample. 
Then $X$ is simply connected. 
\end{conj}

It is well known that Conjecture \ref{f-conj1.4} 
holds when $(X, \Delta)$ is kawamata log terminal (\cite{takayama}). 
Kento Fujita pointed out that Conjecture \ref{f-conj1.4} 
holds true when $(X, \Delta)$ is log canonical. 

\begin{thm}[Fujita, Theorem \ref{f-thm6.1}]\label{f-thm1.5} 
Let $(X, \Delta)$ be a projective log canonical 
pair such that $-(K_X+\Delta)$ is ample. 
Then $X$ is simply connected.  
\end{thm}

\begin{ack}
The author would like to thank Takeshi 
Abe, Kento Fujita, Yuichiro Hoshi, and Tetsushi Ito 
for answering his questions and giving him useful comments. 
He was partially supported by Grant-in-Aid 
for Young Scientists (A) 24684002, 
Grant-in-Aid for Scientific Research (S) 
16H06337, and (B) 16H03925 
from JSPS. 
He thanks the referees for useful comments.  
\end{ack} 

We work over $\mathbb C$, the complex number field, throughout this 
paper. We recommend the reader to see \cite{fujino-intro} 
for a gentle introduction to the theory of quasi-log structures. 
We will never publish \cite{fujino-book}. 
Therefore, we reproduce some of the arguments from \cite{fujino-book} 
in the current paper. For the basic definitions and properties of 
semi-log canonial pairs, see \cite{fujino-slc}. 

\section{Preliminaries}\label{f-sec2} 

\begin{notation}\label{f-notation2.1}
A pair $[X, \omega]$ consists of a scheme $X$ and an $\mathbb R$-Cartier 
divisor (or $\mathbb R$-line bundle) $\omega$ on $X$. 
In this paper, a scheme means a separated scheme of finite type over 
$\Spec \mathbb C$. A variety is a reduced scheme. 
\end{notation}

\begin{notation}[Divisors]\label{f-notation2.2} 
Let $B_1$ and $B_2$ be two $\mathbb R$-Cartier divisors 
on a scheme $X$. Then $B_1$ 
is linearly (resp.~$\mathbb Q$-linearly, or $\mathbb R$-linearly) equivalent to 
$B_2$, denoted by 
$B_1\sim B_2$ (resp.~$B_1\sim _{\mathbb Q} B_2$, 
or $B_1\sim _{\mathbb R}B_2$) if 
$
B_1=B_2+\sum _{i=1}^k r_i (f_i)
$ 
such that $f_i \in \Gamma (X, \mathcal K_X^*)$ 
and $r_i\in \mathbb Z$ (resp.~$r_i \in \mathbb Q$, or $r_i \in 
\mathbb R$) for every $i$. Here, $\mathcal K_X$ 
is the sheaf of total quotient rings of 
$\mathcal O_X$ and $\mathcal K_X^*$ is 
the sheaf of invertible elements in the sheaf of rings $\mathcal K_X$. 
We note that 
$(f_i)$ is a {\em{principal Cartier divisor}} associated to $f_i$, that is, 
the image of $f_i$ by 
$
\Gamma (X, \mathcal K_X^*)\to\Gamma (X, \mathcal K_X^*/\mathcal O_X^*)$, 
where $\mathcal O_X^*$ is the sheaf of invertible elements in $\mathcal O_X$. 

Let $D$ be a $\mathbb Q$-divisor (resp.~an $\mathbb R$-divisor) 
on an equidimensional variety $X$, that is, 
$D$ is a finite formal $\mathbb Q$-linear (resp.~$\mathbb R$-linear) 
combination 
$D=\sum _i d_i D_i$ of irreducible 
reduced subschemes $D_i$ of codimension one. 
We define the {\em{round-up}} 
$\lceil D\rceil =\sum _i \lceil d_i \rceil D_i$ (resp.~{\em{round-down}} 
$\lfloor D\rfloor =\sum _i \lfloor d_i \rfloor D_i$), where 
every real number $x$, $\lceil x\rceil$ (resp.~$\lfloor x\rfloor$) is the integer 
defined by $x\leq \lceil x\rceil <x+1$ 
(resp.~$x-1<\lfloor x\rfloor \leq x$). The 
{\em{fractional part}} $\{D\}$ of $D$ denotes $D-\lfloor D\rfloor$. We put 
$D^{<1}=\sum _{d_i<1}d_i D_i$, 
$D^{\leq 1}=\sum _{d_i\leq1}d_i D_i$, and $D^{=1}=\sum _{d_i=1}D_i$. 
We can define $D^{\geq 1}$, $D^{>1}$, 
and so on, analogously. 
We call $D$ a {\em{boundary}} (resp.~{\em{subboundary}}) $\mathbb R$-divisor if 
$0\leq d_i\leq 1$ (resp.~$d_i\leq 1$) for every $i$. 
\end{notation}
\begin{notation}[Singularities of pairs]\label{f-notation2.3}
Let $X$ be a normal variety and let $\Delta$ be an 
$\mathbb R$-divisor on $X$ 
such that $K_X+\Delta$ is $\mathbb R$-Cartier. 
Let $f:Y\to X$ be 
a resolution such that $\Exc(f)\cup f^{-1}_*\Delta$, 
where $\Exc (f)$ is the exceptional locus of $f$ 
and $f^{-1}_*\Delta$ is 
the strict transform of $\Delta$ on $Y$,  
has a simple normal crossing support. We can 
write 
$K_Y=f^*(K_X+\Delta)+\sum _i a_i E_i$. 
We say that $(X, \Delta)$ 
is {\em{sub log canonical}} ({\em{sub lc}}, for short) if $a_i\geq -1$ for every $i$. 
We usually write $a_i= a(E_i, X, \Delta)$
and call it the {\em{discrepancy coefficient}} of 
$E_i$ with respect to $(X, \Delta)$. 
It is well known that there exists the largest Zariski open set $U$ of $X$ such that 
$(U, \Delta|_U)$ is sub log canonical. 
If there exist a resolution $f:Y\to X$ and a divisor $E$ on $Y$ such 
that $a(E, X, \Delta)=-1$ and $f(E)\cap U\ne \emptyset$, then $f(E)$ is called a 
{\em{log canonical center}} (an {\em{lc center}}, 
for short) with respect to $(X, \Delta)$. 
If $(X, \Delta)$ is sub log canonical and $\Delta$ is effective, then 
$(X, \Delta)$ is called {\em{log canonical}} ({\em{lc}}, for short). 

We note that we can define $a(E_i, X, \Delta)$ in more general 
settings (\cite[Definition 2.4]{kollar2}).  
\end{notation}

Let us recall the definition of simple normal crossing pairs. 

\begin{defn}[Simple normal crossing pairs]\label{f-def2.4} 
We say that the pair $(X, D)$ is {\em{simple normal crossing}} at 
a point $a\in X$ if $X$ has a Zariski open 
neighborhood $U$ of $a$ that can be embedded in a smooth 
variety 
$Y$, 
where $Y$ has regular system of 
parameters $(x_1, \cdots, x_p, y_1, \cdots, y_r)$ at 
$a=0$ in which $U$ is defined by a monomial equation 
$
x_1\cdots x_p=0
$ 
and $
D=\sum _{i=1}^r \alpha_i(y_i=0)|_U$ 
with $\alpha_i\in \mathbb R$. 
We say that $(X, D)$ is a {\em{simple 
normal crossing pair}} if it is simple normal crossing at every point of $X$. 
We say that a simple normal crossing pair $(X, D)$ is {\em{embedded}} 
if there exists a closed embedding $\iota:X\to M$, where 
$M$ is a smooth variety of $\dim X+1$. 
We call $M$ the {\em{ambient space}} of $(X, D)$. 
If $(X, 0)$ is a simple normal crossing 
pair, then $X$ is called a {\em{simple normal crossing 
variety}}. If $X$ is a simple normal 
crossing variety, then $X$ has only Gorenstein singularities. 
Thus, it has an invertible dualizing sheaf $\omega_X$. 
Therefore, we can define the {\em{canonical divisor $K_X$}} such that 
$\omega_X\simeq \mathcal O_X(K_X)$. 
It is a Cartier divisor on $X$ and is well-defined up to linear equivalence. 

Let $X$ be a simple normal crossing variety and let $X=\bigcup _{i\in I}X_i$ be the 
irreducible decomposition of $X$. 
A {\em{stratum}} of $X$ is an irreducible component of $X_{i_1}\cap \cdots \cap X_{i_k}$ for some 
$\{i_1, \cdots, i_k\}\subset I$. 

Let $X$ be a simple normal crossing variety and 
let $D$ be a Cartier divisor on $X$. 
If $(X, D)$ is a simple normal crossing pair and $D$ is reduced, 
then $D$ is called a {\em{simple normal crossing divisor}} on $X$. 

Let $(X, D)$ be a simple normal crossing pair. 
Let $\nu:X^\nu \to X$ be the normalization. 
We define $\Theta$ by the formula 
$
K_{X^\nu}+\Theta=\nu^*(K_X+D)$.  
Then a {\em{stratum}} of $(X, D)$ is 
an irreducible component of $X$ or the $\nu$-image 
of a log canonical center of $(X^\nu, \Theta)$ 
(Notation \ref{f-notation2.3}). 
When $D=0$, 
this definition is compatible with the above definition of the strata of $X$. 
When $D$ is a boundary $\mathbb R$-divisor, 
$W$ is a stratum of $(X, D)$ if and only if 
$W$ is an slc stratum of $(X, D)$ 
(\cite[Definition 2.5]{fujino-slc}). Note that 
$(X, D)$ is semi-log canonical if $D$ is a boundary $\mathbb R$-divisor. 
\end{defn}

\begin{notation}
$\pi_1(X)$ denotes the topological fundamental group of 
$X$. 
\end{notation}

\section{Pull-back of quasi-log structures}\label{f-sec3} 

In this section, we give a precise statement of Theorem \ref{f-thm1.1} 
(Theorem \ref{f-thm3.5}). 
First, let us recall the definition of {\em{globally embedded 
simple normal crossing pairs}} in order to define quasi-log schemes. 

\begin{defn}[Globally embedded simple normal crossing 
pairs]\label{def3.1} 
Let $Y$ be a simple normal crossing divisor 
on a smooth 
variety $M$ and let $D$ be an $\mathbb R$-divisor 
on $M$ such that 
$\Supp (D+Y)$ is a simple normal crossing divisor on $M$ and that 
$D$ and $Y$ have no common irreducible components. 
We put $B_Y=D|_Y$ and consider the pair $(Y, B_Y)$. 
We call $(Y, B_Y)$ a {\em{globally embedded simple normal 
crossing pair}} and $M$ the {\em{ambient space}} of $(Y, B_Y)$. 
\end{defn}

It is obvious that a globally embedded simple normal crossing 
pair is an embedded simple normal crossing 
pair in Definition \ref{f-def2.4}. 

Let us define {\em{quasi-log schemes}}. 
For Ambro's original definition in \cite{ambro}, 
see Definition \ref{f-def7.2} 
below. 

\begin{defn}[Quasi-log schemes]\label{def3.2}
A {\em{quasi-log scheme}} is a scheme $X$ endowed with an 
$\mathbb R$-Cartier divisor 
(or $\mathbb R$-line bundle) 
$\omega$ on $X$, a proper closed subscheme 
$X_{-\infty}\subset X$, and a finite collection $\{C\}$ of reduced 
and irreducible subschemes of $X$ such that there is a 
proper morphism $f:(Y, B_Y)\to X$ from a globally 
embedded simple 
normal crossing pair satisfying the following properties: 
\begin{itemize}
\item[(1)] $f^*\omega\sim_{\mathbb R}K_Y+B_Y$. 
\item[(2)] The natural map 
$\mathcal O_X
\to f_*\mathcal O_Y(\lceil -(B_Y^{<1})\rceil)$ 
induces an isomorphism 
$$
\mathcal I_{X_{-\infty}}\overset{\simeq}{\longrightarrow} f_*\mathcal O_Y(\lceil 
-(B_Y^{<1})\rceil-\lfloor B_Y^{>1}\rfloor),  
$$ 
where $\mathcal I_{X_{-\infty}}$ is the defining ideal sheaf of 
$X_{-\infty}$. 
\item[(3)] The collection of subvarieties $\{C\}$ coincides with the image 
of $(Y, B_Y)$-strata that are not included in $X_{-\infty}$. 
\end{itemize}
We simply write $[X, \omega]$ to denote 
the above data 
$
\bigl(X, \omega, f:(Y, B_Y)\to X\bigr)
$ 
if there is no risk of confusion. 
Note that a quasi-log scheme $X$ is the union of $\{C\}$ and $X_{-\infty}$. 
We also note that $\omega$ is called the {\em{quasi-log canonical class}} 
of $[X, \omega]$, which is defined up to $\mathbb R$-linear equivalence.  
We sometimes simply say that 
$[X, \omega]$ is a {\em{quasi-log pair}}. 
The subvarieties $C$ 
are called the {\em{qlc strata}} of $[X, \omega]$, 
$X_{-\infty}$ is called the {\em{non-qlc locus}} 
of $[X, \omega]$, and $f:(Y, B_Y)\to X$ is 
called a {\em{quasi-log resolution}} 
of $[X, \omega]$. 
\end{defn}

\begin{rem}\label{f-rem3.3} 
Let $\Div(Y)$ be the group of Cartier divisors 
on $Y$ and let $\Pic (Y)$ be the Picard group of $Y$. 
Let $
\delta_Y:\Div(Y)\otimes \mathbb R\to \Pic(Y)\otimes\mathbb R
$ 
be the homomorphism 
induced by $A\mapsto \mathcal O_Y(A)$ 
where $A$ is a Cartier divisor on $Y$. 
When $\omega$ is an $\mathbb R$-line bundle in Definition \ref{def3.2}, 
$
f^*\omega\sim _{\mathbb R}K_Y+B_Y
$ 
means 
$
f^*\omega=\delta_Y(K_Y+B_Y)
$ 
in $\Pic(Y)\otimes \mathbb R$. 
Even when $\omega$ is an $\mathbb R$-line bundle, 
we use $-\omega$ to denote the inverse of $\omega$ in 
$\Pic (X)\otimes \mathbb R$ 
if there is no risk of confusion. 
If $\omega$ is an $\mathbb R$-Cartier divisor 
on $X$ in Theorem \ref{f-thm1.1}, $h^*\omega\otimes 
\det \Omega_{X'/X}^1$ means 
$
\delta_{X'}(h^*\omega)\otimes \det \Omega_{X'/X}^1
$ 
in $\Pic (X')\otimes \mathbb R$ where $\delta_{X'}:\Div (X')\otimes 
\mathbb R\to \Pic(X')\otimes \mathbb R$. 
\end{rem}

For various applications, the notion of {\em{qlc pairs}} 
is very useful. 

\begin{defn}\label{f-def3.4}
Let $[X, \omega]$ be a quasi-log pair. 
We say that $[X, \omega]$ has only {\em{quasi-log canonical singularities}} 
({\em{qlc singularities}}, for short) or $[X, \omega]$ is a {\em{qlc pair}} 
if $X_{-\infty}=\emptyset$. 
\end{defn}

Let us state the main theorem of this paper 
precisely. 

\begin{thm}[Main theorem]\label{f-thm3.5}
Let $[X, \omega]$ be a quasi-log pair as in Definition \ref{def3.2}. 
Let $X'$ be a scheme and let $h:X'\to X$ be a 
smooth quasi-projective morphism. 
Then $[X', \omega']$, where $\omega'=h^*\omega\otimes \omega_{X'/X}$ with 
$\omega_{X'/X}=\det \Omega^1_{X'/X}$, has a natural quasi-log 
structure induced by $h$. More precisely, we have the following: 
\begin{itemize}
\item[(i)] {\em{(Non-qlc locus)}}.~There 
is a proper closed subscheme $X'_{-\infty}\subset X'$.  
\item[(ii)] {\em{(Quasi-log resolution)}}.~There exists a proper morphism 
$f':(Y', B_{Y'})\to X'$ from 
a globally embedded simple normal crossing pair $(Y', B_{Y'})$ 
with $f'^*\omega'\sim _{\mathbb R} K_{Y'}+B_{Y'}$ which defines a quasi-log structure on $[X', \omega']$ 
such that 
$\mathcal I_{X'_{-\infty}}=h^*\mathcal I_{X_{-\infty}}$. 
\item[(iii)] {\em{(Qlc strata)}}.~There is a finite collection $\{C'\}$ of reduced 
and irreducible subschemes of $X'$ such that $\{C'\}=\{f^{-1}(C)\}$ 
and that the collection of subvarieties $\{C'\}$ coincides with 
the images of $(Y', B_{Y'})$-strata that are not included in $X'_{-\infty}$. 
\end{itemize}
\end{thm}

For the definition and basic properties of {\em{quasi-projective 
morphisms}}, see \cite[Chapitre II \S 5.3.~Morphismes 
quasi-projectifs]{ega}. 

\section{On quasi-log structures}\label{sec4}

\begin{prop}[{\cite[Proposition 3.50]{fujino-book}}]\label{f-prop4.1}
Let $f:V\to W$ be a proper birational morphism between 
smooth varieties and let $B_W$ be an 
$\mathbb  R$-divisor on $W$ such 
that $\Supp B_W$ is a simple normal crossing divisor on $W$. 
Assume that $K_V+B_V=f^*(K_W+B_W)$ and 
that $\Supp B_V$ is a simple normal crossing divisor on $V$. 
Then we have $$f_*\mathcal O_V(\lceil -(B^{<1}_V)\rceil 
-\lfloor B^{>1}_V\rfloor)\simeq 
\mathcal O_W(\lceil -(B^{<1}_W)\rceil 
-\lfloor B^{>1}_W\rfloor). $$ 
Furthermore, 
let $S$ be a simple normal crossing divisor on $W$ such 
that $S\subset \Supp B^{=1}_W$. Let $T$ be the union of the 
irreducible 
components of $B^{=1}_V$ that are mapped into $S$ by $f$. 
Assume that 
$\Supp f^{-1}_*B_W\cup \Exc (f)$ is a simple normal crossing 
divisor on $V$. 
Then we have 
$$f_*\mathcal O_T(\lceil -(B^{<1}_T)\rceil 
-\lfloor B^{>1}_T\rfloor)\simeq 
\mathcal O_S(\lceil -(B^{<1}_S)\rceil 
-\lfloor B^{>1}_S\rfloor),$$ 
where 
$(K_V+B_V)|_T=K_T+B_T$ and $(K_W+B_W)|_S=K_S+B_S$. 
\end{prop}
\begin{proof}
By $K_V+B_V=f^*(K_W+B_W)$, we 
obtain 
$$
K_V=f^*(K_W+B^{=1}_W+\{B_W\})+f^*
(\lfloor B^{<1}_W\rfloor+\lfloor B^{>1}_W\rfloor)
-(\lfloor B^{<1}_V\rfloor+\lfloor B^{>1}_V\rfloor)
-B^{=1}_V-\{B_V\}.
$$
If $a(\nu, W, B^{=1}_W+\{B_W\})=-1$ for a prime divisor 
$\nu$ over $W$, then 
we can check that $a(\nu, W, B_W)=-1$ by using 
\cite[Lemma 2.45]{kollar-mori}. 
Since $f^*
(\lfloor B^{<1}_W\rfloor+\lfloor B^{>1}_W\rfloor)
-(\lfloor B^{<1}_V\rfloor+\lfloor B^{>1}_V\rfloor)$ is 
Cartier, we can easily see that 
$f^*(\lfloor B^{<1}_W\rfloor+\lfloor B^{>1}_W\rfloor)
=\lfloor B^{<1}_V\rfloor+\lfloor B^{>1}_V\rfloor+E$,  
where $E$ is an effective $f$-exceptional divisor. 
Thus, we obtain 
$f_*\mathcal O_V(\lceil -(B^{<1}_V)\rceil 
-\lfloor B^{>1}_V\rfloor)\simeq 
\mathcal O_W(\lceil -(B^{<1}_W)\rceil 
-\lfloor B^{>1}_W\rfloor)$.   
Next, we consider the short exact sequence: 
$$
0\to \mathcal O_V(\lceil -(B^{<1}_V)\rceil-
\lfloor B^{>1}_V\rfloor-T)\to 
\mathcal O_V(\lceil -(B^{<1}_V)\rceil-
\lfloor B^{>1}_V\rfloor)
\to \mathcal O_T(\lceil -(B^{<1}_T)\rceil-
\lfloor B^{>1}_T\rfloor)\to 0. 
$$
Since $T=f^*S-F$, where $F$ is an effective $f$-exceptional 
divisor, we can easily see that 
$$f_*\mathcal O_V(\lceil -(B^{<1}_V)\rceil 
-\lfloor B^{>1}_V\rfloor-T)\simeq 
\mathcal O_W(\lceil -(B^{<1}_W)\rceil 
-\lfloor B^{>1}_W\rfloor-S).$$  
We note that 
$
(\lceil -(B^{<1}_V)\rceil 
-\lfloor B^{>1}_V\rfloor-T)-(K_V+\{B_V\}+B^{=1}_V-T)
=-f^*(K_W+B_W)$. 
Therefore, every associated prime 
of $R^1f_*\mathcal O_V(\lceil -(B^{<1}_V)\rceil 
-\lfloor B^{>1}_V\rfloor-T)$ is the generic point of the $f$-image 
of some stratum of $(V, \{B_V\}+B^{=1}_V-T)$ by 
\cite[Theorem 6.3 (i)]{fujino}. 
\begin{claim}\label{f-claim}
No strata of $(V, \{B_V\}+B^{=1}_V-T)$ are 
mapped into $S$ by $f$. 
\end{claim}
\begin{proof}[Proof of Claim]
Assume that there is a stratum $C$ of $(V, \{B_V\}+B^{=1}_V-T)$ such that 
$f(C)\subset S$. Note that 
$\Supp f^*S\subset \Supp f^{-1}_*B_W\cup \Exc (f)$ and 
$\Supp B^{=1}_V\subset \Supp f^{-1}_*B_W\cup \Exc (f)$. 
Since $C$ is also a stratum of $(V, B^{=1}_V)$ and 
$C\subset \Supp f^*S$,  
there exists an irreducible component $G$ of $B^{=1}_V$ such that 
$C\subset G\subset \Supp f^*S$.  
Therefore, by the definition of $T$, $G$ is an 
irreducible component of $T$ because $f(G)\subset S$ and $G$ is an 
irreducible component of $B^{=1}_V$. So, 
$C$ is not a stratum of $(V, \{B_V\}+B^{=1}_V-T)$. It is 
a contradiction. 
\end{proof}
On the other hand, $f(T)\subset S$. Therefore, the connecting 
homomorphism  
$$f_*\mathcal O_T(\lceil -(B^{<1}_T)\rceil
-\lfloor B^{>1}_T\rfloor)
\to R^1f_*\mathcal O_V(\lceil -(B^{<1}_Z)\rceil
-\lfloor B^{>1}_Z\rfloor-T)$$ is a zero map 
by Claim. Thus, we obtain 
$f_*\mathcal O_T(\lceil -(B^{<1}_T)\rceil 
-\lfloor B^{>1}_T\rfloor)\simeq 
\mathcal O_S(\lceil -(B^{<1}_S)\rceil 
-\lfloor B^{>1}_S\rfloor)$ 
by an easy diagram chasing. We finish the proof.  
\end{proof}

It is easy to check: 

\begin{prop}\label{f-prop4.2}
In Proposition \ref{f-prop4.1}, 
let $C'$ be a log canonical center of $(V, B_V)$ contained in $T$. 
Then $f(C')$ is a log canonical 
center of $(W, B_W)$ contained in $S$ or $f(C')$ is contained 
in $\Supp B_W^{>1}$. 
Let $C$ be a log canonical center of $(W, B_W)$ contained in $S$. Then 
there exists a log canonical center $C'$ of $(V, B_V)$ contained in $T$ such that 
$f(C')=C$. 
\end{prop}

\begin{thm}\label{f-thm4.3}
In Definition \ref{def3.2}, we may assume that 
the ambient space $M$ of 
the globally embedded simple normal crossing 
pair $(Y, B_Y)$ is quasi-projective. 
In particular, $Y$ is quasi-projective. 
\end{thm}
\begin{proof}
In Definition \ref{def3.2}, we may assume that 
$D+Y$ is an $\mathbb R$-divisor 
on a smooth variety $M$ such that 
$\Supp (D+Y)$ is a simple normal crossing divisor 
on $M$, $D$ and $Y$ have no common irreducible components, 
and $B_Y=D|_Y$ as in Definition \ref{def3.1}. 
Let $g:M'\to M$ be a projective 
birational morphism from a smooth quasi-projective 
variety $M'$ with the following properties: 
\begin{itemize}
\item[(i)] $K_{M'}+B_{M'}=g^*(K_M+D+Y)$, 
\item[(ii)] $\Supp B_{M'}$ is a simple normal crossing 
divisor on $M'$, and 
\item[(iii)] $\Supp g_*^{-1}(D+Y)\cup \Exc (g)$ is also a 
simple normal crossing divisor on $M'$. 
\end{itemize}
Let $Y'$ be the union of the irreducible components 
of $B_{M'}^{=1}$ that are mapped into $Y$ by $g$. 
We put 
$
(K_{M'}+B_{M'})|_{Y'}=K_{Y'}+B_{Y'}$.  
Then 
$
g_*\mathcal O_{Y'}(\lceil -(B_{Y'}^{<1})\rceil 
-\lfloor B_{Y'}^{>1}\rfloor)\simeq 
\mathcal O_{Y}(\lceil -(B_{Y}^{<1})\rceil 
-\lfloor B_{Y}^{>1}\rfloor)
$ 
by Proposition \ref{f-prop4.1}. 
This implies that 
$\mathcal I_{X_{-\infty}}\overset{\simeq}{\longrightarrow}
f_*g_*\mathcal O_{Y'}(\lceil -(B_{Y'}^{<1})\rceil 
-\lfloor B_{Y'}^{>1}\rfloor)$.  
By construction, 
$
K_{Y'}+B_{Y'}=g^*(K_Y+B_Y)\sim _{\mathbb R}g^*f^*\omega$.  
By Proposition \ref{f-prop4.2}, 
the collection of subvarieties 
$\{C\}$ in Definition \ref{def3.2} coincides with 
the image of $(Y', B_{Y'})$-strata that are not contained in $X_{-\infty}$. 
Therefore, by replacing $M$ and $(Y, B_Y)$ with 
$M'$ and $(Y', B_{Y'})$, we may assume that 
the ambient space $M$ is quasi-projective. 
\end{proof}

\begin{lem}\label{f-lem4.4}
Let $(Y, B_Y)$ be a simple normal crossing 
pair. 
Let $V$ be a smooth 
variety such that 
$Y\subset V$. 
Then we can construct a sequence of 
blow-ups 
$$V_k\to V_{k-1}\to\cdots\to V_0=V$$ with 
the following properties. 
\begin{itemize}
\item[$(1)$] $\sigma_{i+1}:V_{i+1}\to V_i$ is the 
blow-up along a smooth irreducible 
component of $\Supp B_{Y_i}$ for 
every $i\geq 0$. 
\item[$(2)$] We put $Y_0=Y$ and $B_{Y_0}=B_Y$. 
Let $Y_{i+1}$ be the 
strict transform 
of $Y_i$ for every $i\geq 0$. 
\item[$(3)$] 
We define $K_{Y_{i+1}}+B_{Y_{i+1}}=\sigma^*_{i+1}
(K_{Y_i}+B_{Y_i})$ for 
every $i\geq 0$.  
\item[$(4)$] There exists 
an $\mathbb R$-divisor $D$ on $V_k$ such 
that $D|_{Y_k}=B_{Y_k}$. 
\item[$(5)$] $\sigma_*\mathcal O_{Y_k}(\lceil 
-(B^{<1}_{Y_k})\rceil-\lfloor B^{>1}_{Y_k}\rfloor)\simeq 
\mathcal O_Y(\lceil-(B^{<1}_Y)\rceil-\lfloor B^{>1}_Y\rfloor)$, 
where $\sigma: V_k\to V_{k-1}\to\cdots\to V_0=V$.  
\end{itemize}
\end{lem}

\begin{proof} 
It is sufficient to check (5). 
All the other properties are obvious by the construction of the 
sequence of blow-ups. 
By an easy calculation of discrepancy coefficients similar to 
the proof of Proposition \ref{f-prop4.1}, 
we can check that $$\sigma_{i+1*}\mathcal O_{V_{i+1}}(\lceil -(B_{Y_{i+1}}^{<1})
\rceil 
-\lfloor B_{Y_{i+1}}^{>1}\rfloor)\simeq 
\mathcal O_{V_{i}}(\lceil -(B_{Y_{i}}^{<1})\rceil 
-\lfloor B_{Y_{i}}^{>1}\rfloor)$$ for every $i$. 
This implies the desired isomorphism. 
\end{proof}

We can easily check: 

\begin{lem}\label{f-lem4.5} 
In Lemma \ref{f-lem4.4}, 
let $C'$ be a stratum of $(Y_k, B_{Y_k})$. 
Then $\sigma(C')$ is a stratum of $(Y, B_Y)$. 
Let $C$ be a stratum of $(Y, B_Y)$. 
Then there is a stratum $C'$ of $(Y_k, B_{Y_k})$ such that 
$\sigma(C')=C$. 
\end{lem}

The following lemma is easy but 
very useful (cf.~\cite[Proposition 10.59]{kollar2}). 

\begin{lem}\label{f-lem4.6}
Let $Y$ be a simple normal crossing 
variety. Let $V$ be a smooth 
quasi-projective variety such that 
$Y\subset V$. Let $\{P_i\}$ be 
any finite set of closed points of 
$Y$. 
Then we can find a quasi-projective 
variety $W$ such that 
$Y\subset W\subset V$, $\dim W=\dim Y+1$, and 
$W$ is smooth 
at $P_i$ for every $i$. 
\end{lem}

For the proof, see, for example, Step 2 in the proof of 
\cite[Theorem 1.2]{fujino-slc}. We note that we can not always make 
$W$ smooth in Lemma \ref{f-lem4.6}. 

\begin{ex}[{\cite[Example 3.62]{fujino-book}}]\label{f-ex4.7}
Let $V\subset \mathbb P^5$ be the Segre 
embedding of $\mathbb P^1\times \mathbb P^2$. 
In this case, there are no smooth 
hypersurfaces of $\mathbb P^5$ containing $V$. 
We can check it as follows. 

If there exists a smooth hypersurface $S$ such that 
$V\subset S\subset \mathbb P^5$, then 
$\rho (V)=\rho (S)=\rho (\mathbb P^5)=1$ by 
the Lefschetz hyperplane theorem. 
It is a contradiction because $\rho(V)=2$. 
\end{ex}

By the above results, we can prove the final lemma in this section. 

\begin{lem}\label{f-lem4.8}
Let $(Y, B_Y)$ be a simple normal 
crossing 
pair such that 
$Y$ is quasi-projective. Then 
there exist a globally embedded simple normal 
crossing 
pair $(Z, B_Z)$ and 
a morphism 
$\sigma: Z\to Y$ such that 
$
K_Z+B_Z=\sigma^*(K_Y+B_Y) 
$
and 
$\sigma _*\mathcal O_Z(\lceil -(B^{<1}_Z)\rceil
-\lfloor B^{>1}_Z\rfloor)
\simeq \mathcal O_Y(\lceil -(B^{<1}_Y)\rceil
-\lfloor B^{>1}_Y\rfloor)$. 
Moreover, let $C'$ be a stratum of $(Z, B_Z)$. 
Then $\sigma(C')$ is a stratum of $(Y, B_Y)$ or $\sigma(C')$ is 
contained in $\Supp B_Y^{>1}$. 
Let $C$ be a stratum of $(Y, B_Y)$. 
Then there exists a stratum $C'$ of $(Z, B_Z)$ such 
that $\sigma(C')=C$. 
\end{lem}
\begin{proof}
Let $V$ be a smooth quasi-projective 
variety such that 
$Y\subset V$. 
By Lemma \ref{f-lem4.4} and Lemma \ref{f-lem4.5}, 
we may assume 
that there exists an $\mathbb R$-divisor 
$D$ on $V$ such that 
$D|_Y=B_Y$. 
Then we apply Lemma \ref{f-lem4.6}. 
We can find a quasi-projective 
variety $W$ such that 
$Y\subset W\subset V$, $\dim W=\dim Y+1$, and 
$W$ is smooth at the generic point of any stratum 
of $(Y, \Supp B_Y)$. 
Of course, we can make $W\not\subset \Supp D$ 
by the proof of Lemma \ref{f-lem4.6}. 
We apply 
Hironaka's resolution to $W$ and 
use Szab\'o's resolution lemma (see, for example, 
\cite[3.5 Resolution lemma]{fujino-what}). 
More precisely, we take blow-ups outside $U$, where 
$U$ is the largest Zariski open set 
of $W$ such that 
$(Y, B_Y)|_U$ is a globally embedded simple normal 
crossing pair. 
Then we obtain a desired globally embedded 
simple normal crossing 
pair $(Z, B_Z)$. 
Precisely speaking, we can check that 
$(Z, B_Z)$ has the desired properties 
by an easy calculation 
of discrepancy coefficients 
similar to the proof of Proposition \ref{f-prop4.1}. 
\end{proof}

\begin{thm}\label{f-thm4.9}
In Definition \ref{def3.2}, 
it is sufficient to assume that $(Y, B_Y)$ is 
a quasi-projective $($not necessarily embedded$)$ 
simple normal crossing pair. \end{thm}

\begin{proof}
We only assume that $(Y, B_Y)$ is a simple normal crossing pair 
in Definition \ref{def3.2}. We assume that $Y$ is quasi-projective. 
Then we apply Lemma \ref{f-lem4.8} to $(Y, B_Y)$. 
Let $\sigma:(Z, B_Z)\to (Y, B_Y)$ be as in Lemma \ref{f-lem4.8}. 
Then 
$
\bigl(X, \omega, f\circ \sigma: (Z, B_Z)\to X\bigr)
$ is a quasi-log scheme in the sense of Definition \ref{def3.2}. 
\end{proof}

Proposition \ref{f-prop4.10} shows that 
it is not so easy to apply Chow's lemma 
directly to make $(Y, B_Y)$ quasi-projective in Definition \ref{def3.2}. 

\begin{prop}[{\cite[Proposition 3.65]{fujino-book}}]\label{f-prop4.10}
There exists a complete simple normal crossing 
variety $Y$ with the following property. 
If $f:Z\to Y$ is a proper surjective morphism from 
a simple normal crossing variety $Z$ such 
that $f$ is an isomorphism 
over the generic point of any stratum of $Y$, then 
$Z$ is non-projective. 
\end{prop}

\begin{proof}
We take a smooth complete non-projective toric variety 
$X$. 
We put $V=X\times \mathbb P^1$. Then 
$V$ is a toric variety. 
We consider $Y=V\setminus T$, where 
$T$ is the big torus of $V$. 
We will see that $Y$ has the desired property. 
By the above construction, there is 
an irreducible component $Y'$ of $Y$ that is 
isomorphic to $X$. 
Let $Z'$ be the irreducible component of $Z$ mapped 
onto $Y'$ by $f$. 
So, it is sufficient to see that 
$Z'$ is not projective. 
On $Y'\simeq X$, there is a torus invariant effective 
one cycle $C$ such that 
$C$ is numerically trivial. 
By construction and the assumption, $g=f|_{Z'}: 
Z'\to Y'\simeq X$ is birational and an isomorphism 
over the generic point of any torus invariant curve 
on $Y'\simeq X$. 
We note that any torus invariant curve on $Y'\simeq 
X$ is a stratum of $Y$. 
We assume that $Z'$ is projective, 
then there is a very ample 
effective divisor $A$ on $Z'$ such that 
$A$ does not contain any irreducible components of 
the inverse image of $C$. 
Then $B=f_*A$ is an effective Cartier divisor 
on $Y'\simeq X$ such that 
$\Supp B$ contains no irreducible 
components of $C$. 
It is a contradiction because $\Supp B\cap C\ne \emptyset$ 
and $C$ is numerically trivial. 
\end{proof}

Proposition \ref{f-prop4.10} is the main 
reason why we proved Theorem \ref{f-thm4.3} 
for the proof of our main theorem:~Theorem \ref{f-thm1.1} 
and Theorem \ref{f-thm3.5}. 
Now the proof of Theorem \ref{f-thm1.1} is almost obvious. 

\begin{proof}[Proof of Theorem \ref{f-thm3.5}]
Let $f:(Y, B_Y)\to X$ be a quasi-log resolution as in Definition \ref{def3.2}. 
By Theorem \ref{f-thm4.3}, 
we may assume that $Y$ is quasi-projective. 
We consider the fiber product $Y'=Y\times _XX'$.  
$$
\xymatrix{
Y' \ar[r]^{h'}\ar[d]_{f'}&Y\ar[d]^{f}\\
X' \ar[r]_{h}&X
} 
$$
We put $B_{Y'}=h'^*B_Y$. 
Then $(Y', B_{Y'})$ is a quasi-projective simple normal crossing pair because 
$h$ is a smooth quasi-projective 
morphism and $(Y, B_Y)$ is a quasi-projective simple normal crossing 
pair. 
Since $K_Y+B_Y\sim _{\mathbb R}f^*\omega$, 
we have 
$$
f'^*\omega'=f'h^*\omega\otimes f'^*\omega_{X'/X}
=h'f^*\omega\otimes \omega_{Y'/Y}
\sim_{\mathbb R}h'^*(K_Y+B_Y)\otimes \omega_{Y'/Y}
=K_{Y'}+B_{Y'}. 
$$ Note that $\omega_{X'/X}$ is trivial when $h$ is \'etale. 
By the flat base change theorem, 
we have 
\begin{align*}
h^*\mathcal I_{X_{-\infty}}&=h^*f_*
\mathcal O_Y(\lceil -(B_Y^{<1})\rceil 
-\lfloor B_Y^{>1}\rfloor)\simeq f'_*h'^*
\mathcal O_Y(\lceil -(B_Y^{<1})\rceil -\lfloor B_Y^{>1}\rfloor)\\
&\simeq f'_*\mathcal O_{Y'}(\lceil -(B_{Y'}^{<1})\rceil -\lfloor B_{Y'}^{>1}\rfloor). 
\end{align*}
Finally, by Theorem \ref{f-thm4.9}, we may assume that 
$(Y', B_{Y'})$ is a globally embedded simple normal crossing 
pair. Therefore, 
$
\bigl(X', \omega', f':(Y', B_{Y'})\to X'\bigr) 
$ 
gives us the desired quasi-log structure. 
\end{proof}

\section{An application to quasi-log canonical Fano varieties}\label{f-sec5}

Let us recall the vanishing theorem for projective qlc pairs. 

\begin{thm}[Vanishing theorem for qlc pairs]\label{f-thm5.1}
Let $[X, \omega]$ be a projective 
qlc pair and let $L$ be a Cartier divisor on $X$ such that 
$L-\omega$ is ample. 
Then $H^i(X, \mathcal O_X(L))=0$ for every $i>0$. 
\end{thm}

We give a proof of Theorem \ref{f-thm5.1} for the reader's convenience. 

\begin{proof}
Let $f:(Y, B_Y)\to X$ be a quasi-log resolution as in Definition \ref{def3.2}. 
Since $[X, \omega]$ is qlc, $B_Y=B^{\leq 1}_Y$ holds. 
Then 
$$
f^*(L-\omega)\sim _{\mathbb R}f^*L-(K_Y+B_Y)
=f^*L+\lceil -(B_Y^{<1})\rceil-(K_Y+\{B_Y\}+B_Y^{=1}) 
$$
because $B_Y=B_Y^{\leq 1}$. Therefore, we have 
$
H^i(X, f_*\mathcal O_Y(f^*L+\lceil -(B_Y^{<1})\rceil))=0
$ 
for every $i>0$ by \cite[Theorem 1.1 (ii)]{fujino-vanishing}. 
Note that 
$
f_*\mathcal O_Y(f^*L+\lceil -(B_Y^{<1})\rceil)
\simeq \mathcal O_X(L)\otimes f_*\mathcal O_Y(\lceil 
-(B_Y^{<1})\rceil)\simeq \mathcal O_X(L)$ 
because $X_{-\infty}=\emptyset$. 
This implies that 
$
H^i(X, \mathcal O_X(L))=0
$ 
for every $i>0$. 
\end{proof}

By combining Theorem \ref{f-thm5.1} with 
Theorem \ref{f-thm3.5}, 
we can easily check Corollary \ref{f-cor1.2}. 

\begin{proof}[{Proof of Corollary \ref{f-cor1.2}}]
Without loss of generality, we may assume that 
$X$ is connected. 
Since $-\omega$ is ample, $H^i(X, \mathcal O_X)=0$ for 
every $i>0$ by Theorem \ref{f-thm5.1}. 
Therefore, we have $\chi(X, \mathcal O_X)=1$. 
Assume there exists a nontrivial finite \'etale morphism $f: 
\widetilde X\to X$ from a connected scheme $\widetilde X$. 
By Theorem \ref{f-thm3.5}, the pair $[\widetilde X, \widetilde \omega]$, 
where $\widetilde \omega=f^*\omega$, 
is a qlc pair such that $-\widetilde \omega$ is ample. 
Thus, $H^i(\widetilde X, \mathcal O_{\widetilde X})=0$ for 
every $i>0$ by Theorem \ref{f-thm5.1} again. 
This implies $\chi (\widetilde X, \mathcal O_{\widetilde X})=1$. 
By the Riemann--Roch formula (\cite[Example 18.3.9]{fulton}), 
we have 
$
\chi (\widetilde X, \mathcal O_{\widetilde X})=\deg f\cdot \chi (X, \mathcal O_X)$.  
Therefore, we obtain $\deg f=1$, a contradiction. 
This means that $X$ has no nontrivial 
finite 
\'etale covers, or equivalently, the algebraic fundamental 
group of $X$ is trivial. 
\end{proof}

As a direct consequence of Corollary \ref{f-cor1.2} and 
the main theorem of \cite{fujino-slc}, we have: 

\begin{cor}\label{f-cor5.2} 
Let $(X, \Delta)$ be a projective semi-log canonical 
pair such that $-(K_X+\Delta)$ is ample. 
Then the algebraic fundamental group of $X$ is trivial. 
\end{cor}

\begin{proof}
By \cite{fujino-slc}, $[X, K_X+\Delta]$ has a natural quasi-log structure 
with only qlc singularities. 
Therefore, Corollary \ref{f-cor5.2} is a special case of Corollary \ref{f-cor1.2}. 
\end{proof}

Note that a union of some slc strata of a log Fano pair with only semi-log 
canonical singularities 
is a quasi-log canonical Fano variety by Example \ref{f-ex5.3}. 

\begin{ex}\label{f-ex5.3}
Let $(X, \Delta)$ be a connected 
projective semi-log canonical pair such that 
$-(K_X+\Delta)$ is ample. 
Let $W$ be a union of some slc strata of $(X, \Delta)$ with 
the reduced scheme structure. Then 
$[W, \omega]$, where $\omega=(K_X+\Delta)|_W$, is a projective 
qlc pair such that $-\omega$ is ample by adjunction 
(\cite[Theorem 1.13]{fujino-slc}). 
By \cite[Theorem 1.11]{fujino-slc}, we obtain $H^1(X, \mathcal I_W)=0$ 
where $\mathcal I_W$ is the defining ideal sheaf of $W$ on $X$. 
Therefore, we obtain $H^0(W, \mathcal O_W)=\mathbb C$ 
by the surjection $\mathbb C=H^0(X, \mathcal O_X)\to H^0(W, \mathcal O_W)$. 
This implies that $W$ is connected. 
\end{ex}

The author learned the following example from Tetsushi Ito. 

\begin{ex}[Topological versus algebraic]\label{f-ex5.4}
We consider the Higman group $G$. 
It is generated by $4$ elements $a$, $b$, $c$, $d$ with 
the relations
\begin{align*}
a^{-1}ba=b^2, \quad b^{-1}cb=c^2, \quad c^{-1}dc=d^2, \quad d^{-1}ad=a^2. 
\end{align*}
It is well known that $G$ has no nontrivial 
finite quotients. 
By \cite[Theorem 12.1]{simpson}, 
there is an irreducible projective variety $X$ such that 
$\pi_1(X)\simeq G$. 
In this case, the algebraic fundamental group of $X$, which is 
the profinite completion of $\pi_1(X)$, is trivial. 
\end{ex}

Example \ref{f-ex5.4} shows that 
Conjecture \ref{f-conj1.3} does not directly follow from Corollary \ref{f-cor1.2}. 
We give a nontrivial example of reducible log Fano pairs with only semi-log 
canonical singularities. 

\begin{ex}\label{f-ex5.5}
We consider the lattice $N=\mathbb Z^3$. 
Let $n$ be an integer with $n\geq 3$. 
We consider a convex polyhedron $P$ in $N_{\mathbb R}=N\otimes 
\mathbb R\simeq \mathbb R^3$ whose 
vertices are $v_0, v_1, \ldots, v_n\in N$ such that 
$v_0=(0, 0, -1)$ and that the third coordinates 
of $v_1, \ldots, v_n$ are $1$. 
Assume that 
$P$ contains $(0, 0, 0)$ in its interior. 
Then the cones spanned by $(0, 0, 0)$ and faces of $P$ subdivide 
$\mathbb R^3$ into 
$n+1$ three-dimensional cones. 
This subdivision of $\mathbb R^3$ corresponds to a complete toric 
threefold $X$. 
Then we have the following properties. 
\begin{itemize}
\item[(1)] $-K_X$ is ample since $P$ is convex. 
\item[(2)] $D_0\sim D_1+\cdots +D_n$ and $D_0$ is $\mathbb Q$-Cartier, 
where $D_i$ is the torus invariant prime divisor 
on $X$ associated to $v_i$ for every $i$. 
\item[(3)] Let $x\in X$ be the torus invariant 
closed point associated to the cone spanned by $v_1, v_2, \ldots, v_n$. 
Then $X\setminus x$ is $\mathbb Q$-factorial, but 
$X$ is not $\mathbb Q$-factorial when $n\geq 4$. 
\item[(4)] We put $\Delta=D_1+\cdots +D_n$. Then 
$(X, \Delta)$ is a log canonical 
Fano threefold. 
Note that $-(K_X+\Delta)\sim D_0$. 
\item[(5)] We put $W=\lfloor \Delta\rfloor=\Delta$ and 
$
K_W+\Delta_W=(K_X+\Delta)|_W$.  
Then $(W, \Delta_W)$ is a two-dimensional 
log Fano pair with only semi-log canonical singularities. 
Note that $W$ is Cohen--Macaulay since $W$ is 
$\mathbb Q$-Cartier. 
\end{itemize} 
This $W$ shows that the number of 
irreducible components of log Fano pairs with only semi-log 
canonical singularities is not 
bounded. 
\end{ex}

We recommend the reader who can read Japanese 
to see \cite{fujino-fano} for some related topics and open problems 
on singular Fano varieties. 

\section{Simple connectedness of log canonical log Fano 
pairs}\label{f-sec6}

In this section, we prove that a 
log Fano pair with only log canonical singularities 
is always simply connected. 
Theorem \ref{f-thm6.1} is Fujita's answer to the author's question. 

\begin{thm}[Fujita]\label{f-thm6.1} 
Let $(X, \Delta)$ be a projective log canonical 
pair such that $-(K_X+\Delta)$ is ample. 
Then $X$ is simply connected.  
\end{thm}
\begin{proof} 
First of all, we may assume that $X$ is connected. 
Without loss of generality, we may assume that 
$\Delta$ is a $\mathbb Q$-divisor by perturbing $\Delta$ slightly. 
Then, by \cite[Corollary 1.3 (2)]{hacon-mckernan}, 
$X$ is rationally chain connected. 
Since $X$ is normal and rationally chain connected, $\pi_1(X)$ is finite 
(\cite[4.13 Theorem]{kollar}). 
Let $f: \widetilde X\to X$ be the universal cover of $X$. 
Since $\pi_1(X)$ is finite, $f$ is finite and \'etale. 
It is obvious that 
$(\widetilde X, \widetilde \Delta)$ is log canonical and 
$-(K_{\widetilde X}+\widetilde \Delta)$ is ample, where 
$
K_{\widetilde X}+\widetilde \Delta=f^*(K_X+\Delta)$. 
By \cite[Theorem 8.1]{fujino}, we have 
$
H^i(X, \mathcal O_X)=H^i(\widetilde X, \mathcal O_{\widetilde X})=0
$ 
for every $i>0$. This implies 
$
\chi(X, \mathcal O_X)=\chi (\widetilde X, \mathcal O_{\widetilde X})=1$.  
On the other hand, 
$
\chi(\widetilde X, \mathcal O_{\widetilde X})=\deg f\cdot \chi(X, \mathcal O_X) 
$ 
holds by the Riemann--Roch formula 
(\cite[Example 18.3.9]{fulton}). 
Thus we obtain $\deg f=1$. 
Therefore, $X$ is simply connected. 
\end{proof}

\begin{rem}\label{f-rem6.2}
By \cite[Corollary 1.3 (2)]{hacon-mckernan}, we can 
easily see that a 
log Fano pair with only semi-log canonical singularities 
is rationally chain connected. 
However, \cite[4.13 Theorem]{kollar} does not always 
hold for 
{\em{non-normal}} rationally chain connected varieties. 
Note that a nodal rational curve $C$ is rationally chain connected 
such that $\pi_1(C)$ is infinite. 
Therefore, the proof of Theorem \ref{f-thm6.1} does not 
work for log Fano pairs with only semi-log canonical singularities. 
\end{rem}

The following well-known example shows some subtleties 
on log Fano pairs with only log canonical singularities. 
Example \ref{f-ex6.3} says that Theorem \ref{f-thm6.1} 
does not always hold when $-(K_X+\Delta)$ is only nef and 
big. 

\begin{ex}\label{f-ex6.3}
Let $C\subset \mathbb P^2$ be a smooth 
cubic curve and let $X\subset \mathbb P^3$ be the cone over 
$C\subset \mathbb P^2$. 
Then $X$ is a Gorenstein log canonical surface such that 
$-K_X$ is ample. 
It is easy to see that $X$ is rationally chain connected and 
that $\pi_1(X)=\{1\}$ by Theorem \ref{f-thm6.1}. 
Let $f:Y\to X$ be the blow-up at $P$ where $P$ is the vertex of $X$. 
Then $
K_Y+E=f^*K_X$.  
The pair $(Y, E)$ is purely log terminal and $-(K_Y+E)$ is big and semiample. 
Note that the exceptional curve $E$ is isomorphic to $C$ and that $Y$ is a $\mathbb P^1$-bundle 
over $C$. 
Therefore, it is easy to see that $Y$ is not rationally chain connected and 
$\pi_1(Y)\ne \{1\}$. 
\end{ex}

Example \ref{f-ex6.4} is a nontrivial example 
of {\em{irreducible non-normal}} semi-log canonical Fano varieties. 

\begin{ex}\label{f-ex6.4}
We put $X=(x^2w-zy^2=0)\subset \mathbb P^3$. 
Then $X$ is a Gorenstein Fano variety with only 
semi-log canonical singularities. 
Note that $X$ is irreducible and non-normal. 
By using the van Kampen theorem, 
we see that $\pi_1(X)=\{1\}$. 
\end{ex}

\section{Appendix:~Ambro's original definition}\label{f-sec7} 

In this section, we prove that 
our definition of quasi-log schemes (Definition \ref{def3.2}) 
is equivalent to Ambro's original definition in \cite{ambro}. 

First, let us recall the definition of {\em{normal crossing pairs}}. 
We need it for Ambro's original definition of quasi-log schemes in \cite{ambro}. 

\begin{defn}[Normal crossing pairs]\label{f-def7.1}
A variety $X$ has {\em{normal crossing}} singularities 
if, for every closed point $x\in X$, 
$$
\widehat{\mathcal O}_{X,x}\simeq \frac{\mathbb C[[x_0, \cdots, x_{N}]]}
{(x_0\cdots x_k)}
$$
for some $0\leq k \leq N$, where 
$N=\dim X$. 
Let $X$ be a normal crossing variety. 
We say that a reduced divisor $D$ on $X$ 
is {\em{normal crossing}} if, 
in the above notation, 
we have 
$$
\widehat{\mathcal O}_{D, x}\simeq \frac{\mathbb C[[x_0, \cdots, 
x_{N}]]}{(x_0\cdots x_k, x_{i_1}\cdots x_{i_l})}
$$ 
for some $\{i_1, \cdots, i_l\}\subset\{k+1, \cdots, N\}$. 
A {\em{stratum}} of 
$X$ is an irreducible component of $X$ or the $\nu$-image 
of a log canonical center of $(X^\nu, \Xi)$, 
where $\nu:X^\nu\to X$ is the normalization and $K_{X^\nu}+\Xi=
\nu^*K_X$. 
A {\em{permissible}} Cartier divisor on $X$ is a Cartier divisor on $X$ whose 
support contains no strata of $X$.
A {\em{permissible}} $\mathbb R$-Cartier divisor is a finite 
$\mathbb R$-linear 
combination of permissible Cartier divisors on $X$. 
We say that the pair $(X, B)$ is a 
{\em{normal crossing pair}} if the 
following 
conditions are satisfied. 
\begin{itemize}
\item[(1)] $X$ is a normal crossing 
variety, and 
\item[(2)] $B$ is a permissible $\mathbb R$-Cartier divisor 
whose support is normal crossing on $X$. 
\end{itemize}

We say that a normal crossing 
pair $(X, B)$ is {\em{embedded}} 
if there exists a closed embedding 
$\iota:X\to M$, where 
$M$ is a smooth variety 
of dimension $\dim X+1$. 
We call $M$ the {\em{ambient space}} of $(X, B)$.  
We put 
$$K_{X^\nu}+\Theta=\nu^*(K_X+B), $$ 
where 
$\nu:X^\nu\to X$ is the normalization of $X$. 
A {\em{stratum}} of $(X, B)$ 
is an irreducible 
component of $X$ or 
the $\nu$-image of some log canonical 
center of $(X^\nu, \Theta)$ on $X$. 
\end{defn}

It is obvious that a {\em{simple normal crossing 
pair}} in Definition \ref{f-def2.4} is a {\em{normal crossing 
pair}} in Definition \ref{f-def7.1}. 
Note that the differences between normal crossing varieties and 
simple normal crossing varieties sometimes 
cause some subtle troubles. 
For the details, see, for example, \cite[3.6 Whitney umbrella]{fujino-what}. 

Let us recall Ambro's original definition of 
quasi-log schemes in \cite{ambro}. 

\begin{defn}[Quasi-log schemes]\label{f-def7.2}
A {\em{quasi-log scheme}} is a scheme $X$ endowed with an 
$\mathbb R$-Cartier divisor 
(or $\mathbb R$-line bundle) $\omega$, a proper closed subscheme 
$X_{-\infty}\subset X$, and a finite collection $\{C\}$ of reduced 
and irreducible subschemes of $X$ such that there is a 
proper morphism $f:(Y, B_Y)\to X$ from an {\em{embedded 
normal crossing pair}} 
satisfying the following properties: 
\begin{itemize}
\item[(1)] $f^*\omega\sim_{\mathbb R}K_Y+B_Y$. 
\item[(2)] The natural map 
$\mathcal O_X
\to f_*\mathcal O_Y(\lceil -(B_Y^{<1})\rceil)$ 
induces an isomorphism 
$$
\mathcal I_{X_{-\infty}}\overset{\simeq}{\longrightarrow} f_*\mathcal O_Y(\lceil 
-(B_Y^{<1})\rceil-\lfloor B_Y^{>1}\rfloor),  
$$ 
where $\mathcal I_{X_{-\infty}}$ is the defining ideal sheaf of 
$X_{-\infty}$. 
\item[(3)] The collection of subvarieties $\{C\}$ coincides with the image 
of $(Y, B_Y)$-strata that are not included in $X_{-\infty}$. 
\end{itemize}
\end{defn}

In Definition \ref{def3.2}, we assume that 
$(Y, B_Y)$ is a {\em{globally embedded simple normal crossing pair}}. 
On the other hand, in Definition \ref{f-def7.2}, we only assume that 
$(Y, B_Y)$ is an {\em{embedded normal crossing pair}}. 

\begin{rem}[Schemes versus varieties]\label{f-rem7.3}
A quasi-log {\em{scheme}} is called 
a quasi-log {\em{variety}} in \cite{ambro}. 
However, $X$ is not always reduced when $X_{-\infty}\ne \emptyset$. 
Note that 
$X$ is reduced when $X_{-\infty}=\emptyset$. 
\end{rem}

\begin{ex}[{\cite[Examples 4.3.4]{ambro}}]\label{f-ex7.4}
Let $X$ be an effective Cartier divisor 
on a smooth variety $M$ such that $\Supp X$ is a simple normal crossing 
divisor. 
Assume that $Y$, the reduced part of $X$, is non-empty. 
We put $\omega=(K_M+X)|_X$. 
Let $X_{-\infty}$ be the union of the non-reduced components of $X$. 
We put $K_Y+B_Y=(K_M+X)|_Y$. 
Let $f:Y\to X$ be the closed embedding. 
Then $
\bigl( X, \omega, f:(Y, B_Y)\to X\bigr)
$ 
is a quasi-log scheme. 
Note that $X$ has 
non-reduced irreducible components if $X_{-\infty}\ne \emptyset$. 
We also note that $f$ is not surjective 
if $X_{-\infty}\ne \emptyset$. 
\end{ex}

Lemma \ref{f-lem7.6} is essentially the same as Ambro's 
{\em{embedded log transformations}} in \cite{ambro}. 

\begin{lem}\label{f-lem7.6} 
Let $(Y, B_Y)$ be an embedded normal crossing 
pair and let $M$ be the ambient space of $(Y, B_Y)$. 
Then there are a projective surjective morphism 
$\sigma:M'\to M$ from a smooth 
variety $M'$ such that 
$\sigma$ is a composition of blow-ups and 
a simple normal crossing 
pair $(Z, B_Z)$ embedded into $M'$ with the following 
properties. 
\begin{itemize}
\item[(i)] $\sigma:Z\to Y$ is surjective 
and $K_Z+B_Z=\sigma^*(K_Y+B_Y)$. 
\item[(ii)] $\sigma_*\mathcal O_Z(\lceil -(B_Z^{<1})\rceil-\lfloor B_Z^{>1}\rfloor)
\simeq \mathcal O_Y(\lceil -(B_Y^{<1})\rceil -\lfloor B_Y^{>1}\rfloor)$. 
\item[(iii)] Let $C'$ be a stratum of $(Z, B_Z)$. 
Then $\sigma(C')$ is a stratum of $(Y, B_Y)$ or 
is contained in $\Supp B_Y^{>1}$. 
Let $C$ be a stratum of $(Y, B_Y)$. 
Then there is a stratum $C'$ of $(Z, B_Z)$ such that 
$\sigma(C')=C$. 
\end{itemize}
\end{lem}
\begin{proof}
First, we can construct a sequence of blow-ups 
$M_k\to M_{k-1}\to\cdots\to M_0=M$ with 
the following properties. 
\begin{itemize}
\item[(a)] $\sigma_{i+1}:M_{i+1}\to M_i$ is the blow-up 
along a smooth stratum of $Y_i$ for every $i$. 
\item[(b)] We put $Y_0=Y$, $B_{Y_0}=B_Y$, and 
$Y_{i+1}=\sigma_{i+1}^{-1}(Y_i)$ with the reduced scheme structure. 
\item[(c)] $Y_k$ is a simple normal crossing divisor on $M_k$. 
\end{itemize}
We can check that $K_{Y_{i+1}}=\sigma_{i+1}^*K_{Y_i}$ for 
every $i$ by construction. We can directly check that 
$R^1\sigma_{i+1*}\mathcal O_{M_{i+1}}(-Y_{i+1})=0$ and 
$\sigma_{i+1*}\mathcal O_{M_{i+1}}(-Y_{i+1})\simeq 
\mathcal O_{M_i}(-Y_i)$ for every $i$. 
Therefore, by the diagram: 
$$
\xymatrix{
0\ar[r]&\mathcal O_{M_i}(-Y_i)\ar[r]\ar[d]^{\simeq}
&\mathcal O_{M_i}\ar[r]\ar[d]^{\simeq}&\mathcal O_{Y_i} \ar[d]\ar[r]&0\ \\
0\ar[r]&\sigma_{i+1*}\mathcal O_{M_{i+1}}(-Y_{i+1})
\ar[r]&\sigma_{i+1*}\mathcal O_{M_{i+1}}
\ar[r]&\sigma_{i+1*}\mathcal O_{Y_{i+1}}\ar[r]&0, 
}
$$ 
we obtain $\sigma_{i+1*}\mathcal O_{Y_{i+1}}\simeq \mathcal O_{Y_i}$ for every $i$. 
We put $B_{Y_{i+1}}=\sigma_{i+1}^*B_{Y_i}$ for every $i$. 
Then, by replacing $(Y, B_Y)$ and $M$ with 
$(Y_k, B_{Y_k})$ and $M_k$, we may assume that 
$Y$ is a simple normal crossing divisor on $M$. 

Next, we can construct a sequence of blow-ups 
$M_k\to M_{k-1}\to\cdots\to M_0=M$ with 
the following properties. 
\begin{itemize}
\item[(1)] $\sigma_{i+1}:M_{i+1}\to M_i$ is the blow-up 
along a smooth stratum of $(Y_i, \Supp B_{Y_i})$ contained 
in $\Supp B_{Y_i}$ for every $i$. 
\item[(2)] We put $Y_0=Y$ and $B_{Y_0}=B_Y$. 
Let $Y_{i+1}$ be the strict transform of $Y_i$ on $M_{i+1}$ for every $i$. 
\item[(3)] We put $K_{Y_{i+1}}+B_{Y_{i+1}}=\sigma_{i+1}^*(K_{Y_i}+B_{Y_i})$ for 
every $i$. 
\item[(4)] $\Supp B_{Y_k}$ is a simple normal crossing divisor on $Y_k$. 
\end{itemize} 

Finally, by construction, we can check the properties (i), (ii), and (iii) 
for $\sigma:M_k\to M$ and $(Y_k, B_{Y_k})$ by an easy calculation of 
discrepancy coefficients similar to the proof of Proposition \ref{f-prop4.1}. 
\end{proof}

\begin{prop}\label{f-prop7.7} 
Assume that 
$(Y, B_Y)$ is an embedded simple normal crossing 
pair in Definition \ref{f-def7.2}. 
Let $M$ be the ambient space of $(Y, B_Y)$. 
Then, by taking some sequence of blow-ups of $M$, 
we may further assume that 
$(Y, B_Y)$ is a globally embedded simple normal crossing 
pair in Definition \ref{f-def7.2}. 
\end{prop}
\begin{proof} 
It is sufficient to apply Lemma \ref{f-lem4.4} and Lemma \ref{f-lem4.5} by 
putting $V=M$. If $B_Y=B^{\leq 1}_Y$, 
then this proposition is nothing but \cite[Lemma 3.3]{fujino-slc}. 
\end{proof}

Therefore, by Lemma \ref{f-lem7.6} and Proposition \ref{f-prop7.7}, 
Definition \ref{def3.2} is equivalent to Ambro's original 
definition of quasi-log schemes:~Definition \ref{f-def7.2}. 

\begin{thm}\label{f-thm7.8}
Definition \ref{def3.2} is equivalent to Definition \ref{f-def7.2}. 
\end{thm}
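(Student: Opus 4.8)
The plan is to prove the two implications separately, using the two preparatory results that were just established. The forward direction, that every quasi-log scheme in the sense of Definition \ref{def3.2} is a quasi-log scheme in the sense of Definition \ref{def7.1}, is essentially trivial: a globally embedded simple normal crossing pair is in particular an embedded simple normal crossing pair, and an embedded simple normal crossing pair is an embedded normal crossing pair (the support condition in Definition \ref{def-snc} is stronger than the normal crossing condition in Definition \ref{def8.1}). Hence the very same data $\bigl(X, \omega, f:(Y, B_Y)\to X\bigr)$ witnesses both definitions; conditions (1), (2), (3) are literally the same in the two definitions, so nothing further needs to be checked.

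For the converse, suppose $\bigl(X, \omega, f:(Y, B_Y)\to X\bigr)$ is a quasi-log scheme in the sense of Definition \ref{def7.1}, so $(Y, B_Y)$ is an embedded normal crossing pair with ambient space $M$. First I would apply Lemma \ref{lem8.4} to $(Y, B_Y)$: it produces a projective surjective morphism $\sigma:M'\to M$, a composition of blow-ups, and an \emph{embedded simple} normal crossing pair $(Z, B_Z)$ inside $M'$ with $\sigma:Z\to Y$ surjective, $K_Z+B_Z=\sigma^*(K_Y+B_Y)$, the pushforward isomorphism $\sigma_*\mathcal O_Z(\lceil -(B_Z^{<1})\rceil-\lfloor B_Z^{>1}\rfloor)\simeq \mathcal O_Y(\lceil -(B_Y^{<1})\rceil-\lfloor B_Y^{>1}\rfloor)$, and the stratum correspondence in (iii). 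Then I would apply Proposition \ref{prop7.3} to $(Z, B_Z)$, taking a further sequence of blow-ups $\tau:M''\to M'$, to arrange that $(Z', B_{Z'})$, the appropriate strict transform, is a \emph{globally} embedded simple normal crossing pair, while preserving $K_{Z'}+B_{Z'}=\tau^*(K_Z+B_Z)$, the pushforward isomorphism, and the stratum correspondence (vi).

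It then remains to verify that $\bigl(X, \omega, (f\circ\sigma\circ\tau):(Z', B_{Z'})\to X\bigr)$ is a quasi-log scheme in the sense of Definition \ref{def3.2} with the same $X_{-\infty}$ and the same collection $\{C\}$. Condition (1) is immediate: $(f\circ\sigma\circ\tau)^*\omega = (\sigma\circ\tau)^*f^*\omega \sim_{\mathbb R} (\sigma\circ\tau)^*(K_Y+B_Y) = K_{Z'}+B_{Z'}$ by the two $\mathbb R$-linear pullback identities. Condition (2) follows by pushing forward the composed pushforward isomorphisms: $(f\circ\sigma\circ\tau)_*\mathcal O_{Z'}(\lceil -(B_{Z'}^{<1})\rceil-\lfloor B_{Z'}^{>1}\rfloor)\simeq f_*\mathcal O_Y(\lceil -(B_Y^{<1})\rceil-\lfloor B_Y^{>1}\rfloor)\simeq \mathcal I_{X_{-\infty}}$, and one checks the natural map $\mathcal O_X\to (f\circ\sigma\circ\tau)_*\mathcal O_{Z'}(\lceil -(B_{Z'}^{<1})\rceil)$ is compatible with it. Condition (3) is where the stratum bookkeeping matters: combining the stratum correspondences from Lemma \ref{lem8.4}(iii) and Proposition \ref{prop7.3}(vi), the $(Z', B_{Z'})$-strata push down under $\sigma\circ\tau$ to exactly the $(Y, B_Y)$-strata together with possibly some subvarieties contained in $\Supp B_Y^{>1}$; but subvarieties contained in $\Supp B_Y^{>1}$ map into $X_{-\infty}$ (their images are among $f(N)$ with $N=\lfloor B_Y^{>1}\rfloor$, cf.\ Remark \ref{rem3.4}), so the images of $(Z', B_{Z'})$-strata \emph{not} contained in the preimage of $X_{-\infty}$ coincide with the images of $(Y, B_Y)$-strata not contained in $X_{-\infty}$, which is precisely $\{C\}$. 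The main obstacle, such as it is, is the last point: one must track carefully that the extra strata introduced by the blow-ups in Lemma \ref{lem8.4} (those landing in $\Supp B_Y^{>1}$) are exactly the ones that get absorbed into $X_{-\infty}$, so that the collection $\{C\}$ is genuinely unchanged; all the cohomological and $\mathbb R$-linear equivalence checks are formal consequences of the stated isomorphisms.
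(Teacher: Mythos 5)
Your proposal is correct and follows exactly the paper's route: the forward implication is immediate from the inclusions of classes of pairs, and the converse is obtained by applying Lemma \ref{lem8.4} followed by Proposition \ref{prop7.3} and composing the crepant pullbacks, pushforward isomorphisms, and stratum correspondences (the paper states this in one line just before the theorem, leaving the bookkeeping implicit). Your additional observation that the strata landing in $\Supp B_Y^{>1}$ are absorbed into $X_{-\infty}$ via Remark \ref{rem3.4} is precisely the detail needed to see that the collection $\{C\}$ is unchanged.
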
 

\end{document}